\documentclass[a4paper]{scrartcl}
\usepackage{amsmath}
\usepackage{amssymb}
\usepackage{enumerate}
\usepackage{amsthm}
\usepackage[obeyDraft]{todonotes}
\usepackage[UKenglish]{babel}
\usepackage[T1]{fontenc}
\usepackage[utf8]{inputenc}
\usepackage[hyperfootnotes=false]{hyperref}

\newtheorem{theorem}{Theorem}[section]
\newtheorem{lemma}[theorem]{Lemma}
 
\theoremstyle{definition}

\theoremstyle{remark}
\newtheorem{remark}[theorem]{Remark}
\numberwithin{equation}{section}

\newcommand{\R}{\mathbb{R}}
\newcommand{\Q}{\mathbb{Q}}

\newcommand{\lip}{\operatorname{Lip}}

\newcommand{\M}{\mathcal{M}}
\newcommand{\N}{\mathcal{N}}
\DeclareMathOperator{\diam}{diam}

% The following command is taken from the answer http://tex.stackexchange.com/a/30726/93559 by Gonzalo Medina (http://tex.stackexchange.com/users/3954/gonzalo-medina)
\newcommand\blfootnote[1]{%
  \begingroup
  \renewcommand\thefootnote{}\footnote{#1}%
  \addtocounter{footnote}{-1}%
  \endgroup
}

\title{$\sigma$-Porosity of the set of strict~contractions in a space of non-expansive~mappings.}
\author{Christian Bargetz \and Michael Dymond}
\date{1st September 2015}

\begin{document}
\maketitle
\begin{abstract}
  \textbf{Abstract.}
  We consider the space of non-expansive mappings on a bounded, closed and convex subset of a
  Banach space equipped with the metric of uniform convergence. We show that the set of strict
  contractions is a \linebreak$\sigma$-porous subset. If the underlying Banach space is separable, we exhibit a $\sigma$-porous subset of the space of non-expansive~mappings outside of which all mappings attain the maximal Lipschitz constant one at typical points of their domain. \blfootnote{The final publication is available via \url{http://dx.doi.org/10.1007/s11856-016-1372-z}}
\end{abstract}
\section{Introduction}

In the context of fixed point theory, the space of non-expansive mappings $f\colon C\to C$ 
on a closed, convex and bounded subset $C$ of a Banach space $X$ has been well studied. 
F.~S.~De~Blasi and J.~Myjak came upon the question of the size of the set of strict contractions in 
this space. In~\cite{dBM89:Porosity}, they prove that if $X$ is a Hilbert 
space the set of strict contractions on $C$ is negligible in the sense that it is $\sigma$-porous. 
In~\cite{Rei05:GenericityPorosity} S.~Reich formulated the question of whether this is 
also true in general Banach spaces. An important tool in the proof of De Blasi and Myjak, 
contained in~\cite{dBM76:Convergence}, is Kirszbraun's extension theorem for 
Lipschitz mappings which is not available for general Banach spaces. Therefore it is not 
possible to extend this proof to Banach spaces. The aim of this article is to answer the
question of S.~Reich in the positive, i.e., proving $\sigma$-porosity of the set of strict
contractions in the Banach space setting. In the case of separable Banach spaces we get the
stronger result that outside a $\sigma$-porous subset of the space of non-expansive mappings, all non-expansive mappings have 
the maximal possible Lipschitz constant one at typical points of their domain.

In~\cite{Rak62:Contractive} the concept of contractive mappings is introduced. A non-expansive mapping 
$f\colon C\to C$ is called contractive, if it satisfies
\[
\|f(x)-f(y)\| \leq \phi^f(\|x-y\|)\|x-y\|
\]
for a decreasing function $\phi^f\colon [0,\operatorname{diam}(C)]\to[0,1]$ with $\phi^f(t)<1$ for
$t>0$. This means that on large scales $f$ behaves like a strict contraction but on small scales
it can approximate an isometry.
Therefore being a strict contraction is a stronger assumption than being a contractive mapping.
S.~Reich and A.~J.~Zaslavski show in~\cite{RZ01:Noncontractive} that the set of 
non-contractive mappings on $C$ is $\sigma$-porous. Combining the results of the present paper 
with those of~\cite{RZ01:Noncontractive}, we see that, except for a $\sigma$-porous set, all 
non-expansive mappings are contractive with Lipschitz constant one.
In~\cite{dBMRZ09:GenericExistence} and~\cite{PL14:PorosityFixedPoints} similar problems for 
set-valued~mappings are considered.

A detailed discussion of the fixed point problem for non-expansive mappings can be found in 
Chapter~3 of the book~\cite{BL00:GeometricNonlinear}.

Given a non-empty metric space $M$ with metric $d$, a point $x\in M$ and $r>0$ we denote by $B(x,r)$
the open ball with centre $x$ and radius $r$. A subset $A\subset M$ is said to be 
\emph{porous at $x\in A$} if there are constants $\alpha>0$ and $\varepsilon_0>0$ with the following 
property: For all $\varepsilon\in (0,\varepsilon_0)$ there is a point $y\in M$ with 
$\left\|y-x\right\|\leq \varepsilon$ and $B(y, \alpha\,\varepsilon)\cap A=\emptyset$. The set $A$ is 
called \emph{porous} if it is porous at all of its points. Some authors call sets satisfying this strong porosity condition \emph{lower porous}, see for example~\cite{Zaj05:OnSigmaPorous}.
A set $F\subset M$ is called \emph{$\sigma$-porous} if $F$ may be expressed as a countable union of porous sets. Note that 
every $\sigma$-porous set is a set of first category in the sense of the Baire category theorem.
For a detailed overview of different notions of porosity, we refer to~\cite{Zaj05:OnSigmaPorous}.

\section{Construction}
Let $X$ be a Banach space with norm $\|\cdot\|$ and unit sphere $S(X)$. Given two elements $x,y\in X$, we will write $[x,y]$ for the line segment with endpoints $x$ and $y$. Let $C\subset X$ be a bounded, closed and convex subset with more than one element. 
For a mapping $f:C\to C$, we define the Lipschitz constant of $f$ by
\begin{equation*}
\lip(f)=\sup\left\{\frac{\left\|f(y)-f(z)\right\|}{\left\|y-z\right\|}\quad\colon\quad y,z\in C,\quad y\neq z\right\}.
\end{equation*}
Given a vector $x\in C$ and a set $U \subseteq C$, we further define the Lipschitz constant of $f$ at $x$ and on $U$ respectively by
\begin{align*}
\lip(f,x)&:= \limsup_{r\to 0+} \left\{\frac{\left\|f(y)-f(x)\right\|}{\left\|y-x\right\|}\quad\colon\quad y\in B(x,r)\cap C,\quad y\neq x\right\},
\end{align*}
and
\[
\lip(f,U):=\lip(f\arrowvert_{U})
=\sup\left\{\frac{\left\|f(y)-f(z)\right\|}{\left\|y-z\right\|}\quad\colon\quad y,z\in U,\quad y\neq z\right\}.
\]
A mapping $f\colon C\to C$ is called \emph{non-expansive} if $\lip(f)\leq 1$; $f$ is called a \emph{strict contraction} if $\lip(f)<1$.
We denote by $\mathcal{M}$ the set of all non-expansive mappings $f\colon C\to C$ and write $\N$ for the subset of $\M$ formed by the strict contractions. Given a non-empty, open, convex subset $U$ of $C$, we will also consider the set
\begin{equation*}
\N(U)=\left\{f\in\M\mbox{ : }\lip(f,U)<1\right\}.
\end{equation*}
of all mappings in $\M$ whose Lipschitz constant on $U$ is strictly less than one. Note that $\N(U)$ contains the set $\N$ of strict contractions.

The set $\M$ together with the metric of uniform convergence $d$, given by 
\[
d(f,g)=\left\|f-g\right\|_{\infty}=\sup_{x\in C}\|f(x)-g(x)\|,
\]
form a complete metric space. 
Let us now state our main results.

\begin{theorem}\label{theorem:mainresult}
  The set $\N$ of all strict contractions is a $\sigma$-porous subset of $\mathcal{M}$.
\end{theorem}

If $X$ is a separable Banach space we get the following stronger result:

\begin{theorem}\label{theorem:mainresult2}
  Let $X$ be a separable Banach space.
  Then there is a $\sigma$-porous set $\widetilde{\N}\subset M$ such that for every $f\in\M\setminus \widetilde{\N}$, 
  the set 
  \begin{equation}\label{eq:Rf}
    R(f)=\left\{x\in C\quad\colon\quad \lip(f,x)=1\right\}
  \end{equation}
  is a residual subset of $C$.
\end{theorem}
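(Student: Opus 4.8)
The plan is to deduce Theorem~\ref{theorem:mainresult2} from a localised strengthening of Theorem~\ref{theorem:mainresult}, namely the assertion that $\N(U)$ is $\sigma$-porous in $\M$ for \emph{every} non-empty, relatively open, convex $U\subseteq C$. I expect the construction proving Theorem~\ref{theorem:mainresult} (the case $U=C$, since $\N=\N(C)$) to localise without essential change, and I take this localised lemma as the engine of the argument. Granting it, since $X$ is separable so is $C$, and the sets $B(c,q)\cap C$ with $c$ ranging over a countable dense subset of $C$ and $q$ over the positive rationals form a countable base $\{U_i\}_{i\in\mathbb{N}}$ for the topology of $C$ consisting of relatively open convex sets of arbitrarily small diameter. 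I then set $\widetilde{\N}=\bigcup_{i}\N(U_i)$, which is a countable union of $\sigma$-porous sets and hence $\sigma$-porous.

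It remains to show that every $f\in\M\setminus\widetilde{\N}$ has $R(f)$ residual. First note that $f\notin\N(U_i)$ for all $i$ forces $\lip(f,U_i)=1$ for every $i$, as $f$ is non-expansive. Now decompose the complement of $R(f)$: for positive integers $k,m$ put
\begin{equation*}
H_{k,m}=\left\{x\in C \;:\; \|f(y)-f(x)\|\leq \left(1-\tfrac{1}{k}\right)\|y-x\| \text{ for all } y\in B(x,\tfrac1m)\cap C\right\}.
\end{equation*}
Unwinding the definition of $\lip(f,x)$ as a $\limsup$ gives $C\setminus R(f)=\bigcup_{k,m}H_{k,m}$. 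Each $H_{k,m}$ is closed: if $x_n\to x$ with $x_n\in H_{k,m}$ and $y\in B(x,\tfrac1m)\cap C$, then $y\in B(x_n,\tfrac1m)$ for large $n$, and passing to the limit using the continuity of $f$ yields the defining inequality at $x$. Thus $C\setminus R(f)$ is an $F_\sigma$-set, and it suffices to prove that each $H_{k,m}$ is nowhere dense, i.e. has empty interior.

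Suppose instead that $H_{k,m}$ has non-empty interior. Choosing a basic set $U_j$ from the base with $U_j\subseteq H_{k,m}$ and $\diam(U_j)<\tfrac1m$, every pair $x,y\in U_j$ satisfies $y\in B(x,\tfrac1m)\cap C$, so the defining inequality gives $\|f(y)-f(x)\|\leq(1-\tfrac1k)\|y-x\|$. Hence $\lip(f,U_j)\leq 1-\tfrac1k<1$, that is $f\in\N(U_j)\subseteq\widetilde{\N}$, contradicting $f\notin\widetilde{\N}$. Therefore $C\setminus R(f)$ is meagre and $R(f)$ is residual, completing the deduction.

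The main obstacle is the engine lemma: the $\sigma$-porosity of $\N(U)$. Writing $\N(U)=\bigcup_k\{f:\lip(f,U)\le 1-\tfrac1k\}$, I would prove each piece porous by, given such an $f$ and small $\varepsilon>0$, producing $g\in\M$ with $d(f,g)\le\varepsilon$ that carries a controlled oscillation raising its difference quotient on $U$ close to $1$. Because real-valued Lipschitz functions always admit norm-preserving extensions (so Kirszbraun, unavailable here, is not needed), it is natural to take $g=f+t\,u$ for a fixed direction $u\in S(X)$ and a scalar function $t\colon C\to\R$ that is suitably Lipschitz and localised inside $U$; the slack $\lip(f,U)\le 1-\tfrac1k$ is what allows $t$ to be added while keeping $g$ non-expansive with $g(C)\subseteq C$. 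The robustness needed for a genuine hole forces a trade-off: to witness a quotient near $1$ the two witnessing points must be separated by a distance of order $k\varepsilon$ (so that the amplitude-$\varepsilon$ perturbation suffices), while to survive an $\alpha\varepsilon$-perturbation of $g$ the separation must exceed a multiple of $k\alpha\varepsilon$; choosing $\alpha$ small opens this window and pins down the porosity constant. Verifying that such a $g$ can be kept globally non-expansive and $C$-valued --- in particular across $\partial U$, where $f$ may already have slope one --- is the delicate point, and is exactly where the geometry of $C$ and the careful design of $t$ must do the work.
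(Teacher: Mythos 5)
Your deduction of the theorem is correct, and its architecture coincides with the paper's: both take the $\sigma$-porosity of $\N(U)$ for an arbitrary non-empty, open, convex $U\subseteq C$ (Lemma~\ref{theorem:local}) as the engine, use separability to produce a countable family of relatively open convex sets $U_{i}$ forming a base, and set $\widetilde{\N}=\bigcup_i\N(U_i)$. Where you genuinely diverge is the residuality step. The paper writes $R(f)$ as a countable intersection of the sets $\left\{x:\lip(f,x,r)>s\right\}$ and must prove these \emph{dense}; for that it invokes Lemma~\ref{lemma:x0} to convert the two-point statement $\lip(f,U_{j,k})=1$ into the existence of a single point $y$ with $\lip(f,y)>s$. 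Your complementary decomposition into the closed sets $H_{k,m}$, combined with the observation that a basic set of diameter less than $1/m$ inside $H_{k,m}$ upgrades the pointwise condition to the global bound $\lip(f,U_j)\le 1-\tfrac1k$, reaches the contradiction with $f\notin\N(U_j)$ directly and dispenses with Lemma~\ref{lemma:x0} altogether; this is a slightly more economical finish, at the cost of requiring the base to contain sets of arbitrarily small diameter (which your choice of base provides).

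Two caveats on your closing sketch of the engine lemma, which is where the real work lies. First, the paper does not use your one-sided pieces $\left\{f:\lip(f,U)\le 1-\tfrac1k\right\}$ but the two-sided pinching $\N_a^b(U)=\left\{f: a<\lip(f,U)\le b\right\}$ with $b-a$ small; this matters because the construction only amplifies the existing Lipschitz constant multiplicatively (from about $a$ to about $a(1+\sigma/4)$, with $\sigma$ constrained by the need to keep $g$ non-expansive), so it punches a hole in $\N_a^b(U)$ but could not do so in your larger piece, and it does nothing at all to constant maps, which need the separate Lemma~\ref{lemma:constants}. Second, the paper avoids the additive perturbation $f+tu$ (whose range is indeed hard to keep inside $C$) in favour of the inner perturbation $g(x)=f\left(x+\gamma(x-x_0)\right)$, where $x+\gamma(x-x_0)$ is a convex combination of $x$ and a point of $U$, so that $g(C)\subseteq C$ is automatic. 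Since you explicitly take the localised lemma as given, these issues do not affect the validity of your deduction of the theorem itself.
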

Put differently, Theorem~\ref{theorem:mainresult2} says that outside of a negligible subset of $\M$, all mappings
in the space $\M$ attain the maximal possible Lipschitz constant $1$ at typical points of their 
domain $C$. Note that the conclusion $\lip(f,x)=1$ on a residual set is stronger than the conclusion
$\lip(f)=1$. 

Let us begin working towards the proofs of Theorems~\ref{theorem:mainresult} 
and~\ref{theorem:mainresult2}:

Fix a non-empty, open, convex subset $U$ of $C$. We will prove that $\N(U)$ is a $\sigma$-porous subset of $\M$. For $a,b\in(0,1)$, let 
\begin{equation*}
  \N_{a}^{b}(U)=\left\{f\in\N(U)\colon a<\lip(f,U)\leq b\right\}.
\end{equation*}
The significance of the above decomposition of $\N(U)$ is revealed in the following lemma.
\begin{lemma}\label{lemma:porous}
  If $a,b\in(0,1)$ satisfy the condition
  \begin{equation}\label{eq:b-a}
  b-a <\min\left\{\frac{a}{16}, \frac{a(1-b)}{48b}\right\},
  \end{equation}
  then the set $\N_{a}^{b}(U)$ is porous.
\end{lemma}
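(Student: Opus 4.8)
The plan is to fix an arbitrary $f\in\N_a^b(U)$ and establish porosity of $\N_a^b(U)$ at $f$ directly from the definition: I will exhibit constants $\alpha>0$ and $\varepsilon_0>0$ (depending on $a,b$ and on $f$) so that for every $\varepsilon\in(0,\varepsilon_0)$ there is a map $g\in\M$ with $d(f,g)\le\varepsilon$ and $B(g,\alpha\varepsilon)\cap\N_a^b(U)=\emptyset$. The whole mechanism rests on a single robust witness: I will arrange a pair of points $u,v\in U$ whose $g$-images are pushed so far apart that the local stretch $\|g(u)-g(v)\|/\|u-v\|$ exceeds $b$ by a definite margin. Then for any $h$ with $d(h,g)\le\alpha\varepsilon$ one has $\|h(u)-h(v)\|\ge\|g(u)-g(v)\|-2\alpha\varepsilon$, so that $\lip(h,U)\ge\|h(u)-h(v)\|/\|u-v\|>b$ and hence $h\notin\N_a^b(U)$; this is exactly the disjointness of the ball $B(g,\alpha\varepsilon)$ from the set.

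First I would locate the witness pair. Since $f\in\N_a^b(U)$ we have $\lip(f,U)>a$, so there exist $u_0,v_0\in U$ with $\|f(u_0)-f(v_0)\|>a\|u_0-v_0\|$; the segment $[u_0,v_0]$ lies in $U$ by convexity and at a positive distance $d_1$ from $C\setminus U$. Subdividing this segment and applying a mediant inequality to the telescoping sum $\|f(u_0)-f(v_0)\|\le\sum_i\|f(w_i)-f(w_{i+1})\|$ produces consecutive points of essentially any prescribed small length $\delta$ whose ratio still exceeds $a$, and which stay a fixed distance from the boundary of $U$. I will choose $\delta$ of order $\varepsilon$; the first bound $b-a<a/16$ in \eqref{eq:b-a} is what guarantees the ratio survives the passage to the small scale $\delta$ with enough room to spare, and it also keeps the perturbation radius below $d_1$ once $\varepsilon<\varepsilon_0$.

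Next comes the perturbation. Writing $e=\frac{f(v)-f(u)}{\|f(v)-f(u)\|}$ and fixing the slope $L=1-b$ and height $\varepsilon$, I set
\[
g(x)=f(x)+\bigl(\max\{0,\varepsilon-L\,\dist(x,v)\}-\max\{0,\varepsilon-L\,\dist(x,u)\}\bigr)e,
\]
two opposing tent spikes supported in disjoint balls of radius $\rho=\varepsilon/(1-b)$ around $v$ and $u$, so that $d(f,g)=\varepsilon$ and, choosing the sign of $e$ appropriately, $\|g(u)-g(v)\|=\|f(u)-f(v)\|+2\varepsilon\ge a\delta+2\varepsilon$. The direction of the two spikes must be tilted toward suitable points of $C$ so that, by convexity, the shifted images remain in $C$; budgeting for this tilt is where the sharper bound $b-a<a(1-b)/(48b)$ of \eqref{eq:b-a} is consumed. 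I expect the \emph{main obstacle} to be verifying that $g$ is genuinely non-expansive on all of $C$, not merely on $U$. The dangerous pairs are $(x,y)$ with $x$ in a spike but $y\in C\setminus U$, where $f$ carries no slack. The resolution is an inherited-slack estimate: routing the segment $[x,y]$ through $U$ for a length $\rho$ and using $\lip(f,U)\le b$ on that initial portion yields $\|f(x)-f(y)\|\le\|x-y\|-(1-b)\rho$, which is exactly the room needed to absorb the spike of height at most $(1-b)\rho=\varepsilon$; the cases with both points in $U$ follow from $L=1-b$ and the disjointness of the supports, requiring $\delta\ge 2\rho=2\varepsilon/(1-b)$.

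Finally I would close the estimate. The witness ratio for any $h\in B(g,\alpha\varepsilon)$ is at least $a+\frac{2\varepsilon(1-\alpha)}{\delta}$, so it exceeds $b$ provided $\delta\le\frac{2\varepsilon(1-\alpha)}{b-a}$, while non-expansiveness of $g$ demanded $\delta\ge\frac{2\varepsilon}{1-b}$. The interval $\bigl[\frac{2\varepsilon}{1-b},\frac{2\varepsilon(1-\alpha)}{b-a}\bigr]$ is non-empty precisely when $b-a\le(1-\alpha)(1-b)$, and since \eqref{eq:b-a} forces $\frac{b-a}{1-b}<\frac{a}{48b}$, a definite $\alpha>0$ can be extracted and the same $\alpha$ used uniformly for all small $\varepsilon$. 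Choosing any $\delta$ in this window and the corresponding spike pair completes the construction, proving that $\N_a^b(U)$ is porous at $f$; as $f$ was arbitrary, $\N_a^b(U)$ is porous.
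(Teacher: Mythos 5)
Your skeleton matches the paper's: fix $f\in\N_a^b(U)$, produce $g\in\M$ with $d(f,g)\le\varepsilon$ carrying a witness pair whose stretch exceeds $a$ by a definite margin, and conclude that every $h\in B(g,\alpha\varepsilon)$ has $\lip(h,U)>b$. Your extraction of a small-scale witness pair by subdividing a segment and your ``inherited slack'' estimate for pairs straddling the boundary of $U$ are both workable (the paper instead uses Lemma~\ref{lemma:x0}, which gives a single point $x_0$ and direction $e$ with stretch $>a$ at \emph{all} scales $t\in(0,r)$, and a cut-off function $\psi$ to localise the perturbation inside $U$). The arithmetic closing the argument is also essentially right.

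The genuine gap is in the perturbation itself. You define $g(x)=f(x)+\bigl(\cdots\bigr)e$, i.e.\ you perturb the \emph{values} of $f$ by an additive displacement in the fixed direction $e=\frac{f(v)-f(u)}{\|f(v)-f(u)\|}$, and such a $g$ need not map $C$ into $C$: the points $f(u)$, $f(v)$ may lie on the boundary of $C$ (indeed at extreme points), and $C$ may have empty interior, so there may be no admissible displacement of size $\varepsilon$ with a first-order component along $\pm e$. You flag this and propose to ``tilt'' the spikes toward suitable points of $C$, charging the cost to the second term of \eqref{eq:b-a}, but no such tilt is constructed, and it is exactly here that the argument can fail: in a non-smooth $C$ (e.g.\ a face or vertex of a polyhedral ball) the correction required to return the perturbed value to $C$ can be of first order in $\varepsilon$, i.e.\ comparable to the entire gain $2\varepsilon$ you are trying to create, and after tilting the two displacement directions vary with $x$, so the identity $\|g(u)-g(v)\|=\|f(u)-f(v)\|+2\varepsilon$ and the Lipschitz estimates for $g$ both break down. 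This is not a technicality; it is the same obstruction that prevents the Hilbert-space proof of De~Blasi and Myjak from extending to general Banach spaces. The paper circumvents it by perturbing the \emph{argument} instead: $g(x)=f\bigl(x+\gamma(x-x_0)\bigr)$ as in \eqref{eq:geps}, where $x+\gamma(x-x_0)$ is an explicit convex combination of $x$ and the point $x_0+re\in U\subseteq C$, so $g$ maps $C$ into $C$ automatically, and the separation gain at the witness pair is harvested from the stretching of $f$ itself along $e$ (inequality \eqref{eq:stretch}) rather than from an added displacement in the range. To repair your proof you would need to replace the additive spikes by some such inner perturbation, which is in effect the paper's construction.
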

The discussion and lemmata which follow form the basis of the proof of Lemma~\ref{lemma:porous}. We will need the following known property of Lipschitz mappings on a convex set:
\begin{lemma}\label{lemma:x0}
	Let $K$ be a convex subset of $X$, $f\colon K\to X$ be a Lipschitz mapping and $0<L<\lip(f)$. Then there exist $x_{0}\in K$ and $e\in S(X)$ such that
	\begin{equation}\label{eq:x_0}
	\liminf_{t\to 0+}\frac{\left\|f(x_{0}+te)-f(x_{0})\right\|}{t}>L.
	\end{equation}

\end{lemma}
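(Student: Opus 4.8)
The plan is to reduce the statement to a fact about a single real-valued Lipschitz function along a segment, where the almost-everywhere differentiability of such functions can be exploited. First I would fix a number $L'$ with $L<L'<\lip(f)$. By the definition of $\lip(f)$ as a supremum, there exist distinct points $y,z\in K$ with
\[
\frac{\left\|f(y)-f(z)\right\|}{\left\|y-z\right\|}>L'.
\]
Setting $s=\|y-z\|$ and $e=(y-z)/s\in S(X)$, the whole segment $[z,y]=\{z+te:t\in[0,s]\}$ lies in $K$ by convexity, and the estimate above reads $\left\|f(z+se)-f(z)\right\|>L's$.

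Next I would consider the scalar function $g\colon[0,s]\to\R$ defined by $g(t)=\left\|f(z+te)-f(z)\right\|$. Since $f$ is Lipschitz and the norm is $1$-Lipschitz, $g$ is Lipschitz on $[0,s]$; in particular it is absolutely continuous and differentiable almost everywhere, with $g(s)-g(0)=\int_0^s g'(t)\,dt$. As $g(0)=0$ and $g(s)>L's$, this integral exceeds $L's$, so the set $\{t\in[0,s]:g'(t)>L'\}$ has positive Lebesgue measure. Since $\{s\}$ is a null set, this set contains some point $t_0<s$; I would then set $x_0=z+t_0e\in K$.

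To finish, I would transfer the derivative of $g$ at $t_0$ back to the directional growth of $f$. For $t>0$ with $t_0+t\le s$ the point $x_0+te=z+(t_0+t)e$ still lies on $[z,y]\subset K$, and the reverse triangle inequality gives
\[
g(t_0+t)-g(t_0)\le\left\|f(z+(t_0+t)e)-f(z+t_0e)\right\|=\left\|f(x_0+te)-f(x_0)\right\|.
\]
Dividing by $t$ and passing to the $\liminf$ as $t\to0+$ yields
\[
\liminf_{t\to0+}\frac{\left\|f(x_0+te)-f(x_0)\right\|}{t}\ge\lim_{t\to0+}\frac{g(t_0+t)-g(t_0)}{t}=g'(t_0)>L'>L,
\]
which is exactly the estimate \eqref{eq:x_0}.

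The main obstacle is the passage from the global secant information to the infinitesimal $\liminf$ condition at a single point: all that $L<\lip(f)$ supplies is one chord along which the \emph{average} rate of growth beats $L'$, whereas the conclusion concerns the instantaneous rate. In a general Banach space there is no mean value theorem for the vector-valued path $t\mapsto f(z+te)$, so the argument is routed through the scalar Lipschitz function $g$, whose almost-everywhere differentiability and the fundamental theorem of calculus locate a point where the instantaneous rate is large; the reverse triangle inequality then carries this estimate back to $f$ itself. (One could equally reduce to a scalar function by applying the Hahn--Banach theorem to choose a norming functional for $f(y)-f(z)$, but working directly with $g$ avoids invoking it.)
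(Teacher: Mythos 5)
Your proof is correct and follows essentially the same route as the paper's: both reduce the problem to the almost-everywhere differentiability of a real-valued Lipschitz function along the chord joining the two points witnessing $\lip(f)>L$, combined with the fundamental theorem of calculus for absolutely continuous functions. The only differences are minor — you argue directly with the scalar function $t\mapsto\|f(z+te)-f(z)\|$ and the triangle inequality (thereby avoiding the Hahn--Banach theorem), whereas the paper argues by contradiction using a norming functional $\rho$ and the composition $\rho\circ f$.
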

\begin{proof}
	Choose $x,y\in K$ such that
		\begin{equation*}
		\frac{\left\|f(y)-f(x)\right\|}{\left\|y-x\right\|}>L.
		\end{equation*}
		Let $e=(y-x)/\left\|y-x\right\|\in S(X)$. Assume for a contradiction that for every $x_{0}\in [x,y]\setminus\left\{y\right\}$ the limit inferior given in \eqref{eq:x_0} is at most $L$. Let $\rho$ be a norm one, linear functional on $X$ with the property $\rho(f(y)-f(x))=\left\|f(y)-f(x)\right\|$. Such a functional can be 
		constructed using the Hahn-Banach theorem. Then the restriction of $\rho\circ f$ to $[x,y]$, which we again denote by $\rho\circ f$, is a Lipschitz mapping from the interval $[x,y]$ to $\R$ and is therefore differentiable almost everywhere. Moreover, our assumption implies $\left\|(\rho\circ f)'\right\|_{\infty}\leq L$ and we obtain
		\begin{equation*}
		\left\|f(y)-f(x)\right\|=\rho(f(y))-\rho(f(x))=\int_{x}^{y}(\rho\circ f)'(t)\,{\rm d}t\leq L\left\|y-x\right\|.
		\end{equation*}
		This contradicts the choice of $x$ and $y$.
\end{proof}

Fix $0<a<b<1$, $f\in\N_{a}^{b}(U)$ and let the point $x_{0}\in U$ and the direction $e\in S(X)$ satisfying \eqref{eq:x_0} be given by the conclusion of Lemma~\ref{lemma:x0} when we take $K= U$, $L=a$ and replace $f$ with $f|_{U}$. Choose $r>0$ sufficiently small so that $B(x_{0},r)\cap C\subseteq U$, $x_{0}+re\in U$ and 
\begin{equation}\label{eq:stretch}
\frac{\left\|f(x_{0}+te)-f(x_{0})\right\|}{t}>a
\end{equation}
for all $t\in(0,r)$. Let $\sigma\in(0,1)$ be a constant satisfying $b(1+3\sigma)\leq 1$ which will be determined later in the proof and set $\varepsilon_{0}=\sigma r/2$. Fix $\varepsilon\in (0,\varepsilon_{0})$. We will prove Lemma~\ref{lemma:porous} by finding a mapping $g\in\M$ and a 
constant $\alpha$ depending only on $a$, $b$ with the properties $\left\|g-f\right\|_{\infty}\leq 
\varepsilon$ and $B(g,\alpha \varepsilon)\cap\N_{a}^{b}(U)=\emptyset$.

Define a $1$-Lipschitz function $\phi\colon\R\to\R$ by $\phi(t)=\min\left\{\left|t\right|,
  \varepsilon/\sigma\right\}$. 
Let $e^*$ denote a continuous linear functional on $X$ with $e^*(e) = 1$. Additionally let $\psi\colon X\to [0,1]$ be defined by
\[
\psi(x) = \begin{cases} 1-\frac{2}{r}\operatorname{dist}\left(x,B(0,r/2)\right) & x\in B(0,r) \\ 0 &x\not\in B(0,r)\end{cases}.
\]
The function $\psi$ is equal to one on the ball $B(0,r/2)$ and is $2/r$-Lipschitz. We define a mapping 
$g\colon C\to C$ by
\begin{equation}\label{eq:geps}
  g(x)=f\left(x+\frac{\sigma}{2r}\psi(x-x_{0})\phi(e^{*}(x-x_{0}))(re-(x-x_{0}))\right).
\end{equation}
Let us first verify that $g$ is indeed a mapping from $C$ to $C$. Fix $x\in C$. Then
\begin{equation}\label{eq:convexcomb}
  x+\frac{\sigma}{2r}\psi(x-x_{0})\phi(e^{*}(x-x_{0}))(re-(x-x_{0}))=(1-\lambda)x+\lambda(x_{0}+re)
\end{equation}
with $\lambda:=\frac{\sigma}{2r}\psi(x-x_{0})\phi(e^{*}(x-x_{0}))\in(0,1)$. As $x_0+re\in U\subseteq C$, by the choice of $r$, the expression above defines an element 
of $C$ in the form of a convex combination of elements of $C$. It is now clear that $g$ is a 
well-defined mapping from $C$ to $C$. 

Note that $g(x)=f(x)$ for $x\in C\setminus B(x_{0},r)$, since $\psi(x-x_{0})=0$. Hence, in particular, $\psi$ ensures that we do not increase the Lipschitz constant outside of the set $U$ when we go from $f$ to $g$. This is important because at points outside of $U$, $f$ may already have the maximal permissible Lipschitz constant one. In the case $U=C$, which suffices for the proof of Theorem~\ref{theorem:mainresult}, we may simply set $\psi=1$.

Next we will establish that $g$ is non-expansive. To this end, we will study separately the mapping $\gamma:X\to X$ defined by
\begin{align*}
  \gamma(x)=\frac{\sigma}{2r}\psi(x)\phi(e^{*}(x))(re-x).
\end{align*}
The mapping $\gamma$ can be thought of as the perturbation of the set $C$ around $x_{0}$ through which we obtain $g$ from $f$, because $g(x)=f(x+\gamma(x-x_{0}))$.
\begin{lemma}\label{lemma:gamma}
  The mapping $\gamma$ satisfies $\lip(\gamma)
    \leq3\sigma$.

\end{lemma}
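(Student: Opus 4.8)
The plan is to regard $\gamma$ as the product $\gamma(x)=\frac{\sigma}{2r}\,\psi(x)\,\phi(e^{*}(x))\,(re-x)$ of three Lipschitz factors and to estimate its Lipschitz constant by a telescoping argument. First I would record the elementary data of the factors: the cut-off $\psi$ is $2/r$-Lipschitz, takes values in $[0,1]$, and vanishes outside $B(0,r)$; choosing $e^{*}$ of norm one (which is possible by Hahn--Banach, as $\|e\|=1$ and $e^{*}(e)=1$), the scalar map $x\mapsto\phi(e^{*}(x))$ is $1$-Lipschitz and, by the definition of $\phi$, is bounded in absolute value by $\varepsilon/\sigma$; finally $x\mapsto re-x$ is $1$-Lipschitz and satisfies $\|re-x\|\le 2r$ on the support $B(0,r)$ of $\psi$. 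As a preliminary step I would bound the scalar product $h:=\psi\cdot(\phi\circ e^{*})$, for which $\|h\|_{\infty}\le\varepsilon/\sigma$ and the product rule gives $\lip(h)\le\frac{2}{r}\cdot\frac{\varepsilon}{\sigma}+1\le 2$, the last inequality using $\varepsilon<\varepsilon_{0}=\sigma r/2$.

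It then remains to control $\gamma=\frac{\sigma}{2r}\,h\cdot(re-\cdot)$, and here lies the main obstacle: the factor $re-x$ has no global norm bound, so a single telescoping estimate over all pairs breaks down once one argument is far from $x_{0}$. I would circumvent this by a case distinction. When both $x,y\in\overline{B(0,r)}$, telescoping $h(x)(re-x)-h(y)(re-y)$ and inserting $\|re-\cdot\|\le 2r$, $\|h\|_{\infty}\le\varepsilon/\sigma$ and $\lip(h)\le 2$ gives $\|\gamma(x)-\gamma(y)\|\le\frac{\sigma}{2r}\bigl(2\cdot 2r+\tfrac{\varepsilon}{\sigma}\bigr)\|x-y\|\le\frac{9\sigma}{4}\|x-y\|$. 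When both points lie outside $B(0,r)$, then $\gamma(x)=\gamma(y)=0$ and there is nothing to check. The delicate mixed case, say $x\in B(0,r)$ and $y\notin B(0,r)$, is handled by using that the cut-off supplies more than localization: since $\psi(y)=0$ and $\psi$ is $2/r$-Lipschitz, $\psi(x)=|\psi(x)-\psi(y)|\le\frac{2}{r}\|x-y\|$, which produces exactly the factor of $\|x-y\|$ needed to tame the unbounded term. Concretely, $\gamma(y)=0$ gives $\|\gamma(x)-\gamma(y)\|=\|\gamma(x)\|\le\frac{\sigma}{2r}\,\psi(x)\cdot\tfrac{\varepsilon}{\sigma}\cdot 2r=\varepsilon\,\psi(x)\le\frac{2\varepsilon}{r}\|x-y\|\le\sigma\|x-y\|$.

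Comparing the three cases, the largest constant is $\tfrac{9\sigma}{4}\le 3\sigma$, which yields $\lip(\gamma)\le 3\sigma$. The only non-routine point is the mixed case; the key realization is that it is the Lipschitz decay of $\psi$ near the boundary of $B(0,r)$, and not merely the support of $\psi$, that neutralizes the unbounded factor $re-x$.
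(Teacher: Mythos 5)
Your proof is correct and follows essentially the same route as the paper's: both estimate $\lip(\gamma)$ via the product rule $\lip(vw)\leq\lip(v)\left\|w\right\|_{\infty}+\lip(w)\left\|v\right\|_{\infty}$ applied to the factors $\psi$, $\phi\circ e^{*}$ and $re-\cdot$, arriving at the same numerical bound $\frac{\sigma}{2r}\left(\frac{2}{r}\frac{\varepsilon}{\sigma}2r+2r+\frac{\varepsilon}{\sigma}\right)\leq\frac{9\sigma}{4}\leq 3\sigma$. The only difference is that you make explicit the case distinction forced by the unboundedness of $re-x$ on $X$ --- in particular the mixed case, where the $2/r$-Lipschitz decay of $\psi$ across the boundary of $B(0,r)$ supplies the needed factor of $\left\|x-y\right\|$ --- a point the paper disposes of with the single remark that $\gamma$ vanishes outside $B(0,r)$.
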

  \begin{proof}
Using the formula $\lip(vw)\leq\lip(v)\left\|w\right\|_{\infty}+\lip(w)\left\|v\right\|_{\infty}$
twice we obtain 
\[\lip(\gamma)\leq\frac{\sigma}{2r}\left(\frac{2}{r}\frac{\varepsilon}{\sigma}2r+2r+\frac{\varepsilon}{\sigma}\right)\leq 3\sigma,\]
where we apply the following: $\lip(\psi)\leq2/r$, $\lip(\phi),\lip(e^{*})\leq1$, $\left\|\phi\right\|_{\infty}\leq\varepsilon/\sigma$,  $\left\|\psi\right\|_{\infty}\leq 1$, $\gamma(x)=0$ for $x\notin B(0,r)$ and finally the inequality $\varepsilon<\sigma r/2$.
  \end{proof}
  
  \begin{lemma}\label{lemma:lipg}
  	The mapping $g$ has the following properties:
  	\begin{align}
  	&\left\|g-f\right\|_{\infty}\leq \varepsilon.\label{1}\\
  	&\lip(g)\leq\max\left\{b\left(1+3\sigma \right),\quad\lip(f)\right\}\leq 1.\label{2}\\
  	&\frac{\left\|g(x_{0}+\varepsilon e)-g(x_{0})\right\|}{\varepsilon}\geq a\left(1+\frac{\sigma}{4}\right).\label{3}
  \end{align}
  	\end{lemma}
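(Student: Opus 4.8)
The three assertions each follow from the representation $g=f\circ\Phi$, where $\Phi(x)=x+\gamma(x-x_0)$, combined with Lemma~\ref{lemma:gamma}. The plan is to treat them in turn, with essentially all of the work concentrated in the Lipschitz estimate \eqref{2}.

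For \eqref{1}, the first step is to bound $\gamma$ uniformly. Since $\gamma$ is supported on $B(0,r)$, and there one has $\|re-y\|\le 2r$, $\psi(y)\le 1$ and $\phi(e^*(y))\le\varepsilon/\sigma$, it follows that $\|\gamma\|_\infty\le\frac{\sigma}{2r}\cdot\frac{\varepsilon}{\sigma}\cdot 2r=\varepsilon$. As $f$ is non-expansive, $\|g(x)-f(x)\|=\|f(x+\gamma(x-x_0))-f(x)\|\le\|\gamma(x-x_0)\|\le\varepsilon$ for every $x\in C$, which is \eqref{1}.

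For \eqref{2} the naive bound $\lip(g)\le\lip(f)\,\lip(\Phi)\le\lip(f)(1+3\sigma)$ exceeds $1$ and is useless; the key is that the perturbation is only active where $f$ is controlled by $b$. I would exploit two features of the construction. First, $\psi$ vanishes on the sphere $\partial B(0,r)$ (there $\dist(\cdot,B(0,r/2))=r/2$), so $\Phi$ is the identity on $\partial B(x_0,r)$ and on $C\setminus B(x_0,r)$. Second, $\Phi$ maps $\overline{B(x_0,r)}\cap C$ into $\overline U$: for $x$ in the open ball, $\Phi(x)=(1-\lambda)x+\lambda(x_0+re)$ is a convex combination of the points $x,x_0+re\in U$, hence lies in $U$; and a sphere point $x$ satisfies $\Phi(x)=x$ and is the limit along $[x_0,x)$ of points of $B(x_0,r)\cap C\subseteq U$, hence lies in $\overline U$. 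Since $f$ is continuous, the bound $\lip(f,U)\le b$ upgrades to $\lip(f,\overline U)\le b$. Given $x,x'\in C$ I would split into cases: if both lie in $\overline{B(x_0,r)}$, then $\|g(x)-g(x')\|\le b\|\Phi(x)-\Phi(x')\|\le b(1+3\sigma)\|x-x'\|$; if both lie in $C\setminus B(x_0,r)$, then $\Phi$ is the identity on them and $\lip(f)$ applies; and in the mixed case, with $x$ inside and $x'$ outside, I insert the crossing point $w\in[x,x']\cap\partial B(x_0,r)$ provided by the intermediate value theorem applied to $t\mapsto\|x+t(x'-x)-x_0\|$, note that $\Phi(w)=w$, bound $\|g(x)-g(w)\|$ by the first case and $\|g(w)-g(x')\|$ by the second, and telescope using $\|x-w\|+\|w-x'\|=\|x-x'\|$. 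The resulting constant $\max\{b(1+3\sigma),\lip(f)\}$ is $\le1$ by the choice $b(1+3\sigma)\le1$ and $f\in\M$. This mixed case, together with the verification that $\Phi$ keeps the inner images in $\overline U$, is the step I expect to be the main obstacle.

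Finally, \eqref{3} is a direct computation at the two prescribed points. Since $\phi(0)=0$, one has $\gamma(0)=0$ and hence $g(x_0)=f(x_0)$. At $x_0+\varepsilon e$, the choices $\varepsilon<\sigma r/2$ and $\sigma<1$ force $\varepsilon e\in B(0,r/2)$, so $\psi=1$, while $\phi(e^*(\varepsilon e))=\phi(\varepsilon)=\varepsilon$; substituting into \eqref{eq:geps} collapses the argument of $f$ to $x_0+te$ with $t=\varepsilon\bigl(1+\tfrac{\sigma}{2}(1-\varepsilon/r)\bigr)$. A short check shows $t\in(0,r)$, so \eqref{eq:stretch} gives $\|g(x_0+\varepsilon e)-g(x_0)\|=\|f(x_0+te)-f(x_0)\|>at$; dividing by $\varepsilon$ and using $\varepsilon/r<1/2$ (again from $\varepsilon<\sigma r/2$), so that $1-\varepsilon/r>1/2$, yields the claimed bound $a(1+\sigma/4)$ of \eqref{3}.
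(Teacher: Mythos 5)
Your proposal is correct and follows essentially the same route as the paper: the uniform bound $\|\gamma\|_\infty\le\varepsilon$ for \eqref{1}, the case split inside/outside $B(x_0,r)$ using $\lip(f,U)\le b$ and $\lip(\gamma)\le3\sigma$ for \eqref{2}, and the direct evaluation of $g$ at $x_0$ and $x_0+\varepsilon e$ via \eqref{eq:stretch} for \eqref{3}. The only difference is that you spell out the mixed case (one point inside, one outside $B(x_0,r)$) via a crossing point on the sphere, a step the paper leaves implicit when it asserts that bounding the Lipschitz constant on the two pieces separately suffices.
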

  \begin{proof}
  	Given $x\in C$, the observation
  	\begin{align*}        	\left\|g(x)-f(x)\right\|&=\left\|f\left(x+\gamma(x-x_{0})\right)-f(x)\right\|
  	\leq\lip(f) \left\|\gamma\right\|_{\infty} \leq \varepsilon
  	\end{align*}
  	verifies \eqref{1}.
  	
  	In order to prove \eqref{2}, it suffices to show that both $\lip(g,C\setminus B(x_{0},r))$ and $\lip(g,B(x_{0},r)\cap C)$ satisfy this bound.
  	
  	First consider two points $x,y\in C\setminus B(x_{0},r)$. Then $\psi(x-x_{0})=\psi(y-x_{0})=0$ and so $\gamma(x-x_{0})=\gamma(y-x_{0})=0$. Hence $g(x)=f(x)$, $g(y)=f(y)$ and we get $\left\|g(x)-g(y)\right\|\leq\lip(f)\left\|x-y\right\|$. Thus $\lip(f,C\setminus B(x_{0},r))\leq\lip(f)$.
  	
  	Secondly, let $x,y\in B(x_{0},r)\cap C$. For each $z\in U$ the expression $z+\gamma(z-x_{0})$ defines a convex combination of elements of $U$, as seen in \eqref{eq:convexcomb}. The convexity of $U$ now guarantees that $z+\gamma(z-x_{0})\in U$. Applying this argument with $z=x$ and $z=y$, we can use the estimate for the Lipschitz constant of $f$ restricted to $U$, together with Lemma~\ref{lemma:gamma}, to obtain the following bound:
  	\begin{align*}
  	\left\|g(x)-g(y)\right\|&=\left\|f(x+\gamma(x-x_{0}))-f(y+\gamma(y-x_{0}))\right\|\\
  	&\leq\lip(f,U)\left(1+\lip(\gamma)\right)\left\|x-y\right\|\\
  	&\leq b\left(1+3\sigma \right)\left\|x-y\right\|.
  	\end{align*}
  	This completes the proof of \eqref{2}.
  	
  	Finally let us show \eqref{3}: 	Using the definition of $g$, we observe that
  	\begin{align*}
  	\frac{\|g(x_0+\varepsilon e)-g(x_0)\|}{\varepsilon} &= \frac{\|f(x_0+\varepsilon e + \frac{\sigma\varepsilon}{2r}(r-\varepsilon)e)-f(x_0)\|}{\varepsilon+\frac{\sigma\varepsilon}{2r}(r-\varepsilon)}\times\frac{\varepsilon+\frac{\sigma\varepsilon}{2r}(r-\varepsilon)}{\varepsilon}\\
  	&\geq a \left(1+\frac{\sigma}{2r}(r-\varepsilon)\right)
  	\geq a\left(1+\frac{\sigma}{4}\right)
  	\end{align*}
  	using $\varepsilon<\sigma r/2$, $\sigma\in (0,1)$ and $\varepsilon+\frac{\sigma\varepsilon}{2r}(r-\varepsilon)<r$.
   \end{proof}

  \begin{lemma}\label{lemma:hole}
    Let $h:C\to C$ be a Lipschitz mapping with $\left\|h-g\right\|_{\infty}\leq\frac{a\sigma}{16} \varepsilon$. Then, 
	\[
	\lip(h,U)>a\left(1+\frac{\sigma}{8}\right).
	\]
  \end{lemma}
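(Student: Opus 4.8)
The plan is to transfer the strong stretching of $g$ recorded in \eqref{3} to the nearby map $h$ by a single application of the triangle inequality, so that the argument is essentially a stability estimate. First I would check that the two test points $x_{0}$ and $x_{0}+\varepsilon e$ both lie in $U$: the former does by the choice of $x_{0}$, while the latter equals the convex combination $(1-\varepsilon/r)x_{0}+(\varepsilon/r)(x_{0}+re)$ of the points $x_{0},x_{0}+re\in U$, and hence lies in $U$ because $\varepsilon<\varepsilon_{0}=\sigma r/2<r$ and $U$ is convex. As $\varepsilon>0$ these points are distinct, so the difference quotient of $h$ across them is a legitimate lower bound for $\lip(h,U)$.

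Next I would estimate $\|h(x_{0}+\varepsilon e)-h(x_{0})\|$ from below. Inserting $g$ and correcting by the uniform error, the triangle inequality gives
\[
\|h(x_{0}+\varepsilon e)-h(x_{0})\|\geq\|g(x_{0}+\varepsilon e)-g(x_{0})\|-\|h(x_{0}+\varepsilon e)-g(x_{0}+\varepsilon e)\|-\|h(x_{0})-g(x_{0})\|.
\]
The first term on the right is bounded below by \eqref{3}, while each of the two error terms is at most $\|h-g\|_{\infty}\leq\frac{a\sigma}{16}\varepsilon$, since both evaluation points lie in $C$. This yields
\[
\|h(x_{0}+\varepsilon e)-h(x_{0})\|\geq a\Bigl(1+\tfrac{\sigma}{4}\Bigr)\varepsilon-2\cdot\tfrac{a\sigma}{16}\varepsilon=a\Bigl(1+\tfrac{\sigma}{8}\Bigr)\varepsilon,
\]
and dividing by $\|(x_{0}+\varepsilon e)-x_{0}\|=\varepsilon$ (recall $e\in S(X)$) gives $\lip(h,U)\geq a(1+\sigma/8)$.

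The one point requiring a little care is the \emph{strict} inequality in the conclusion. I would note that \eqref{3} in fact holds strictly: the bound $\frac{\|f(x_{0}+te)-f(x_{0})\|}{t}>a$ in \eqref{eq:stretch} is strict, and moreover $\varepsilon<\sigma r/2<r/2$ forces $\frac{\sigma}{2r}(r-\varepsilon)>\frac{\sigma}{4}$ strictly. Carrying either source of strictness through the chain above upgrades the final difference quotient to $\frac{\|h(x_{0}+\varepsilon e)-h(x_{0})\|}{\varepsilon}>a(1+\sigma/8)$, whence $\lip(h,U)>a(1+\sigma/8)$ as claimed.

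I do not expect any genuine obstacle: the entire content is the observation that a uniform perturbation of size $\frac{a\sigma}{16}\varepsilon$ can shrink a difference quotient at scale $\varepsilon$ by at most $2\cdot\frac{a\sigma}{16}=\frac{a\sigma}{8}$, which is exactly the gap between the value $a(1+\sigma/4)$ supplied by $g$ and the value $a(1+\sigma/8)$ demanded of $h$. This quantitative margin is precisely what will later let the ball $B(g,\frac{a\sigma}{16}\varepsilon)$ avoid $\N_{a}^{b}(U)$, once the choice of $\sigma$ permitted by \eqref{eq:b-a} is used to ensure $a(1+\sigma/8)>b$.
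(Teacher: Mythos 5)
Your proof is correct and follows essentially the same route as the paper: bound $\lip(h,U)$ below by the difference quotient of $h$ over the segment $[x_0,x_0+\varepsilon e]\subseteq U$ and use the triangle inequality to transfer \eqref{3} from $g$ to $h$ at the cost of $2\cdot\frac{a\sigma}{16}\varepsilon=\frac{a\sigma}{8}\varepsilon$. Your extra observation that \eqref{3} in fact holds strictly (via the strict inequality in \eqref{eq:stretch}) is a welcome refinement, since the paper's displayed chain literally only yields $\geq$ while the lemma asserts $>$ (though the strictness is immaterial for the later application, where $a(1+\sigma/8)=2b-a>b$).
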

  \begin{proof}
    From \eqref{3}, we deduce, using $[x_{0},x_{0}+\varepsilon e]\subseteq B(x_{0},r)\cap C\subseteq U$, that
    \begin{align*}
      \lip(h,U)\geq \frac{\left\|h(x_{0}+\varepsilon e)-h(x_{0})\right\|}{\varepsilon}
      &\geq\frac{\left\|g(x_{0}+\varepsilon e)-g(x_{0})\right\|}{\varepsilon}-\frac{a\sigma}{8}\\
      &\geq a\left(1+\frac{\sigma}{4}\right)-\frac{a\sigma}{8}
      = a\left(1+\frac{\sigma}{8}\right).
    \end{align*}
  \end{proof}

We are now ready to present a proof of Lemma~\ref{lemma:porous}.
\begin{proof}[Proof of Lemma~\ref{lemma:porous}]
   Let $a,b\in(0,1)$ be given by the statement of Lemma~\ref{lemma:porous} and set
  \[
  \sigma = \frac{16(b-a)}{a}.
  \]
  Fix $f\in\N_{a}^{b}$ and let $\varepsilon_{0}>0$ be defined according to the discussion following \eqref{eq:stretch}. Given $\varepsilon\in(0,\varepsilon_{0})$, let $g$ be given by \eqref{eq:geps} in the above construction. Lemma~\ref{lemma:lipg} indicates that $g\in\M$ and $\left\|g-f\right\|_{\infty}\leq \varepsilon$. Moreover, Lemma~\ref{lemma:hole} asserts that the intersection $B(g,\alpha \varepsilon)\cap\N_{a}^{b}(U)=\emptyset$ for $\alpha =a\sigma/16=b-a$.
  \end{proof}
\begin{lemma}\label{lemma:constants}
  The set $\N_{0}(U)$ of all non-expansive mappings $f:C\to C$ which are constant on $U$ is a porous subset of $\M$.
\end{lemma}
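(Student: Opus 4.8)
The plan is to follow the perturbation philosophy of Lemma~\ref{lemma:porous}, but with one essential change. Since a map $f\in\N_{0}(U)$ is constant on $U$, say $f\equiv c$ there, reparametrising the domain (as was done via $\gamma$ in the proof of Lemma~\ref{lemma:porous}) leaves $f|_{U}$ untouched and is therefore useless here; instead I will perturb the \emph{values} of $f$ on a small ball inside $U$ so as to create a definite and robust variation of $g$ on $U$. Concretely, fix once and for all two distinct points $x_{0},x_{1}\in U$ (these exist since $U$ is a non-empty open subset of the non-degenerate convex set $C$), put $e=(x_{1}-x_{0})/\|x_{1}-x_{0}\|\in S(X)$, and choose $r\in(0,\|x_{1}-x_{0}\|]$ small enough that $B(x_{0},r)\cap C\subseteq U$; then $[x_{0},x_{1}]\subseteq U$ and $x_{0}+te\in U\cap C$ for all $t\in[0,r]$. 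Let $e^{*}$ be a norm-one functional with $e^{*}(e)=1$ and let $\psi$ be the cut-off function $\psi$ used above (supported on $B(0,r)$, equal to one on $B(0,r/2)$, and $2/r$-Lipschitz). For a given $f\in\N_{0}(U)$ with constant value $c$ on $U$, pick $c_{1}\in C$ with $\|c_{1}-c\|\ge\tfrac12\diam(C)$ and set $w=(c_{1}-c)/\|c_{1}-c\|\in S(X)$.

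Now comes the actual construction. Given small $\varepsilon>0$, put $\delta=\kappa\varepsilon$ (with $\kappa>0$ a constant fixed below) and $\eta=\delta/r$, and define $\phi(t)=\min\{|t|,\delta\}$. First contract $f$ slightly towards $c$, setting $g_{0}=(1-\eta)f+\eta c$; this is again a non-expansive self-map of $C$ with $\lip(g_{0})\le1-\eta$, it still equals $c$ on $U$, and $\|g_{0}-f\|_{\infty}\le\eta\diam(C)$. Then add the localised value-perturbation $P(x)=\tfrac12\psi(x-x_{0})\phi(e^{*}(x-x_{0}))w$, i.e.
\begin{equation*}
g(x)=g_{0}(x)+\tfrac12\,\psi(x-x_{0})\,\phi(e^{*}(x-x_{0}))\,w .
\end{equation*}
On the support of $P$ we have $x\in B(x_{0},r)\cap C\subseteq U$, where $g_{0}(x)=c$, so there $g(x)=c+t\,w$ with $t\in[0,\delta/2]\subseteq[0,\|c_{1}-c\|]$; hence $g(x)\in[c,c_{1}]\subseteq C$, and off the support $g=g_{0}$ maps into $C$. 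Thus $g\colon C\to C$, and the choice $\kappa=(\diam(C)/r+\tfrac12)^{-1}$ together with $\|g-f\|_{\infty}\le\eta\diam(C)+\tfrac12\delta$ gives $\|g-f\|_{\infty}\le\varepsilon$.

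The main work, and the step I expect to be the real obstacle, is verifying that $g$ is non-expansive: the preliminary contraction $g_{0}$ is exactly what frees up the Lipschitz budget needed to absorb the perturbation. I would split pairs $x,y\in C$ into three cases. If both lie in $B(x_{0},r)\cap C\subseteq U$ then $g_{0}$ is constant, so $\|g(x)-g(y)\|=\|P(x)-P(y)\|$, which the product rule $\lip(vw)\le\lip(v)\|w\|_{\infty}+\lip(w)\|v\|_{\infty}$ bounds by $(\delta/r+\tfrac12)\|x-y\|\le\|x-y\|$ once $\delta\le r/2$. If both lie outside $B(x_{0},r)$ then $g=g_{0}$ and $\lip(g_{0})\le1$. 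The mixed case is the delicate one: for $x\in B(x_{0},r)$ and $y\notin B(x_{0},r)$ the perturbation vanishes at $y$, and the key estimate $\psi(x-x_{0})\le(2/r)\|x-y\|$ (from $\psi(x-x_{0})=\tfrac2r(r-\|x-x_{0}\|)$ near the sphere together with $\|x-y\|\ge r-\|x-x_{0}\|$) gives $\|P(x)\|\le\eta\|x-y\|$, whence $\|g(x)-g(y)\|\le(1-\eta)\|x-y\|+\eta\|x-y\|=\|x-y\|$. Hence $\lip(g)\le1$ and $g\in\M$.

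Finally I would read off a robust slope and conclude porosity. Taking $p=x_{0}$ and $q=x_{0}+\delta e\in U\cap C$, one has $g(p)=c$ and $g(q)=c+\tfrac12\delta\,w$, so $\|g(q)-g(p)\|=\delta/2=\kappa\varepsilon/2$, a definite fraction of $\varepsilon$. If $h\in\N_{0}(U)$ were to lie in $B(g,\alpha\varepsilon)$, then $h$ would be constant on $U$, so $h(p)=h(q)$ and
\begin{equation*}
\tfrac{\kappa\varepsilon}{2}=\|g(q)-g(p)\|\le\|g(q)-h(q)\|+\|h(p)-g(p)\|<2\alpha\varepsilon .
\end{equation*}
Choosing $\alpha=\kappa/8$ makes this impossible, so $B(g,\alpha\varepsilon)\cap\N_{0}(U)=\emptyset$ for all sufficiently small $\varepsilon$. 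Since $x_{0},x_{1},e,r$ were fixed independently of $f$ and $\|c_{1}-c\|$ is bounded below by $\tfrac12\diam(C)$, the constants $\alpha$ and $\varepsilon_{0}$ can be taken independent of $f$, which establishes the porosity of $\N_{0}(U)$.
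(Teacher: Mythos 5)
Your argument is correct, but it takes a genuinely different (and considerably heavier) route than the paper. The paper simply interpolates with the identity: it fixes $u,v\in U$, sets $R=\diam(C)$ and $g=(1-\varepsilon/R)f+(\varepsilon/R)\operatorname{id}$, notes that $g$ is a non-expansive self-map of $C$ with $\|g-f\|_{\infty}\le\varepsilon$, and that $\|g(u)-g(v)\|=(\varepsilon/R)\|u-v\|$ because $f$ is constant on $U$; hence every $h$ within $\frac{\|u-v\|}{3R}\varepsilon$ of $g$ fails to be constant on $U$. This three-line computation already yields porosity with constants independent of $f$. Your construction instead contracts $f$ towards its constant value $c$ to create Lipschitz slack and then inserts a localised bump in the range; the case analysis for $\lip(g)\le 1$ is sound (in particular the mixed case, where the estimate $\psi(x-x_{0})\le\frac{2}{r}\|x-y\|$ lets the bump be absorbed by the slack $\eta$), and the separation estimate at $p=x_{0}$, $q=x_{0}+\delta e$ correctly excludes all maps constant on $U$ from $B(g,\alpha\varepsilon)$. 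What your version buys is that the increase of the local Lipschitz behaviour is confined to $B(x_{0},r)$, in the spirit of the construction behind Lemma~\ref{lemma:porous}; what it costs is all the bookkeeping that the identity-interpolation avoids. One cosmetic point: the supremum $\sup_{z\in C}\|z-c\|$ is at least $\frac12\diam(C)$ but need not be attained when $C$ is not compact, so you should only ask for, say, $\|c_{1}-c\|\ge\frac13\diam(C)$; this changes nothing of substance, since the lower bound is used only to make $\varepsilon_{0}$ uniform in $f$, and the definition of porosity in any case allows $\varepsilon_{0}$ to depend on the point.
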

\begin{proof}
  Fix two points $u,v\in U$ and set $R=\diam(C)$. Given $\varepsilon>0$ and a function $f\colon C\to C$ which is constant on $U$, we define
  \[
  g \colon C \to C,\; x\mapsto \left(1-\frac{\varepsilon}{R}\right) f(x) + \frac{\varepsilon}{R} x.
  \]
  From
  \[
  \|g(x)-g(y)\| \leq \left(1-\frac{\varepsilon}{R}\right) \|f(x)-f(y)\| 
  + \frac{\varepsilon}{R}\|x-y\|\leq \|x-y\|
  \]
  we conclude that $g$ is a non-expansive mapping and
  \[
  \|g(x)-f(x)\|= \frac{\varepsilon}{R} \|x-f(x)\| \leq \varepsilon
  \]
  shows $g\in B(f,\varepsilon)$. Using the fact that $f$ is constant on $U$, we get the identity $\|g(u)-g(v)\|=\frac{\varepsilon}{R}\|u-v\|$ and we deduce that all mappings $h\in B\left(g, \frac{\|u-v\|}{3R}\varepsilon\right)$ are non-constant on $U$.
\end{proof}

\begin{lemma}\label{theorem:local}
  $\N(U)$ is a $\sigma$-porous subset of $\M$.
\end{lemma}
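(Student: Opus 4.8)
The plan is to exhibit $\N(U)$ as a countable union of porous sets, drawing all the pieces from Lemmas~\ref{lemma:porous} and~\ref{lemma:constants}. Every $f\in\N(U)$ satisfies $\lip(f,U)<1$, so I would split the mappings according to the value of $\lip(f,U)$. Those with $\lip(f,U)=0$ are exactly the mappings constant on $U$, forming the set $\N_0(U)$, which is porous by Lemma~\ref{lemma:constants}. For the rest, where $\lip(f,U)\in(0,1)$, the aim is to trap each such value between rationals $a<\lip(f,U)\leq b$ for which $(a,b)$ meets the hypothesis~\eqref{eq:b-a} of Lemma~\ref{lemma:porous}, so that $f$ falls into the porous set $\N_a^b(U)$.

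Concretely, I would introduce the countable index set
\[
\mathcal{I}=\left\{(a,b)\in(\Q\cap(0,1))^2:a<b,\ b-a<\min\left\{\frac{a}{16},\frac{a(1-b)}{48b}\right\}\right\}
\]
of all rational pairs in $(0,1)$ satisfying~\eqref{eq:b-a}. By Lemma~\ref{lemma:porous} each $\N_a^b(U)$ with $(a,b)\in\mathcal{I}$ is porous, and Lemma~\ref{lemma:constants} handles $\N_0(U)$, so
\[
\N_0(U)\cup\bigcup_{(a,b)\in\mathcal{I}}\N_a^b(U)
\]
is a countable union of porous sets, hence $\sigma$-porous. It then remains to verify that this union equals $\N(U)$. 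One inclusion is immediate from the definitions; the reverse inclusion reduces to showing that every $f$ with $L:=\lip(f,U)\in(0,1)$ lies in some $\N_a^b(U)$ with $(a,b)\in\mathcal{I}$.

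The one step requiring genuine care — and the point I expect to be the crux — is proving that for each $L\in(0,1)$ there do exist rationals $a<L\leq b<1$ obeying~\eqref{eq:b-a}. I would settle this by a limiting estimate: letting rational $a$ increase to $L$ and rational $b$ decrease towards $L$ (with $a<L\leq b$), the left-hand side $b-a$ tends to $0$, whereas the right-hand side converges to $\min\{L/16,(1-L)/48\}$, which is strictly positive precisely because $0<L<1$. Consequently the inequality~\eqref{eq:b-a} holds once $a$ and $b$ are close enough to $L$, and by density of $\Q$ such a pair with $a<L\leq b<1$ can be selected. This places the remaining mappings in the union, completes the identification $\N(U)=\N_0(U)\cup\bigcup_{(a,b)\in\mathcal{I}}\N_a^b(U)$, and thereby shows $\N(U)$ to be $\sigma$-porous.
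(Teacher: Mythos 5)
Your proof is correct and follows essentially the same route as the paper: both decompose $\N(U)$ into $\N_0(U)$ together with countably many sets $\N_a^b(U)$ whose parameters satisfy~\eqref{eq:b-a}, and invoke Lemmas~\ref{lemma:porous} and~\ref{lemma:constants}. The only (immaterial) difference is how countability is secured: you take rational endpoints and verify directly that every $L\in(0,1)$ is trapped by some admissible rational pair, whereas the paper observes that the admissible intervals form an open cover of $(0,1)$ and extracts a countable subcover by the Lindel\"of property.
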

\begin{proof}
	The family of all intervals $(a,b)\subseteq(0,1)$ satisfying~\eqref{eq:b-a} is an open cover of $(0,1)$. Since $(0,1)$ is a separable metric space and hence a Lindelöf space there exists a countable subcover $\{(a_i,b_i)\}_{i\in\mathbb{N}}$. Hence we may write
  \begin{equation*}
    \N(U)=\left(\bigcup_{i\in\mathbb{N}}\N_{a_i}^{b_i}(U)\right)\cup\N_{0}(U).
  \end{equation*}
  Applying Lemma~\ref{lemma:porous} and Lemma~\ref{lemma:constants} 
  we see that each of the sets in the above countable decomposition of $\N(U)$ is a porous set.
\end{proof}
We are now in a position to combine the results of the present section in proofs of 
our main results.
\begin{proof}[Proof of Theorem~\ref{theorem:mainresult}]
  Choosing $U=C$, the result follows from Lemma~\ref{theorem:local}.
\end{proof}

\begin{proof}[Proof of Theorem~\ref{theorem:mainresult2}]
  Since $X$ is separable, we may choose a countable dense subset $(x_{i})$ of $C$. Letting 
  $(q_{i})$ be an enumeration of $\Q\cap(0,1)$, we define for each pair $i,j\geq 1$ an open 
  subset $U_{i,j}=B(x_{i},q_{j})\cap C$ of $C$. Applying Lemma~\ref{theorem:local} with 
  $U=U_{i,j}$, we obtain that $\N(U_{i,j})$ is a $\sigma$-porous subset of $\M$ for each $i,j$. 
  We define the set $\widetilde{\N}$ by
  \begin{equation*}
    \widetilde{\N}=\bigcup_{i,j}\N(U_{i,j}).
  \end{equation*}
  Clearly $\widetilde{\N}$ is a $\sigma$-porous subset of $\M$. Fix a mapping $f\in\M\setminus \widetilde{\N}$. To 
  complete the proof, we need to verify that the set $R(f)$ defined in \eqref{eq:Rf} is a 
  residual subset of $C$. To this end, observe that 
  \begin{align*}
    R(f)&=\bigcap_{s\in\Q\cap(0,1)}\;\bigcap_{r\in\Q\cap(0,1)}\left\{x\in C\quad\colon\quad \lip(f,x,r)> s\right\},
  \end{align*}
  where we define $\lip(f,x,r)$ by
  \begin{equation*}
  \lip(f,x, r):= \sup \left\{\frac{\left\|f(y)-f(x)\right\|}{\left\|y-x\right\|}\quad\colon\quad y\in B(x,r)\cap C,\quad y\neq x\right\}.
  \end{equation*}
  Thus, it suffices to show that each of the sets expressed in the above intersection is an 
  open, dense subset of $C$. That these sets are open, is readily verified by checking that 
  their complements are closed. For fixed $r$ and $s$, consider a sequence $x_{n}$ with $\lip(f,x_{n},r)\leq s$, which converges to a point $x\in C$. We need to show that $\lip(f,x,r)\leq s$. Fixing $\varepsilon>0$ and $y\in B(x,r)\cap C$, we choose $x_{k}$ such that $\left\|x_{k}-x\right\|<\min\left\{r-\left\|y-x\right\|,\varepsilon/\lip(f)\right\}$. Then, using $\lip(f,x_{k},r)\leq s$, we obtain
  \begin{equation*}
  \left\|f(y)-f(x)\right\|\leq\left\|f(y)-f(x_{k})\right\|+\left\|f(x_{k})-f(x)\right\|\leq s\left\|y-x_{k}\right\|+\varepsilon.
  \end{equation*}
  Letting $\varepsilon\to 0$ and therefore $x_{k}\to x$ completes the argument.
  
  To establish density, we fix $r,s\in\Q\cap(0,1)$ and an open subset $U$ of $C$. Choose 
  $j,k\geq 1$ so that $U_{j,k}\subseteq U$. Since $f\notin\N(U_{j,k})$, it follows from Lemma~\ref{lemma:x0} that $\sup_{x\in U_{j,k}}\lip(f,x)=1$. This implies the existence of a point $y\in U_{j,k}\subseteq U$ such that $\lip(f,y)>s$ and therefore $\lip(f,y,r)>s$ for all $r>0$. Hence 
  \begin{equation*}
  U\cap\left\{x\in C\quad\colon\quad \lip(f,x,r)>s\right\}\neq \emptyset.
  \end{equation*}
\end{proof}

\begin{remark}
  For each $f\in\mathcal{N}$ the mapping 
  \[
  [0,\varepsilon_0)\to \mathcal{C}(C,C), \varepsilon \mapsto g_\varepsilon,
  \]
  where $g_\varepsilon$ is chosen as in \eqref{eq:geps}
  with $\left\|g_{\varepsilon}-f\right\|_{\infty}\leq \varepsilon$ and $B(g_\varepsilon, \alpha \varepsilon)\cap\N_{a}^{b}(U)=\emptyset$,
  is a Lipschitz curve. It would be of interest to check whether such a curve could 
  be differentiable. This would provide information about the directions from which
  the $g_\varepsilon$ approach the strict contraction $f$.
\end{remark}
\vspace{4mm}\noindent
\textbf{Acknowledgement.} The authors would like to thank Eva Kopeck\'{a} for fruitful discussions
and remarks, and Simeon Reich for helpful comments and suggestions. The authors also wish to thank the referee for improvements of the original proof.

\def\cprime{$'$}
\providecommand{\MR}{\relax\ifhmode\unskip\space\fi MR }
% \MRhref is called by the amsart/book/proc definition of \MR.
\providecommand{\MRhref}[2]{%
	\href{http://www.ams.org/mathscinet-getitem?mr=#1}{#2}
}
\providecommand{\href}[2]{#2}

\end{document}